\title[Estimates of the amplitude of holonomies]{Estimates of the amplitude of holonomies by the curvature of a connection on a bundle}
\author{Sagun Chanillo}
\address{Dept. of Mathematics\\
Rutgers University\\
110 Frelinghuysen Rd.\\
Piscataway, NJ 08854\\
USA} \email{chanillo@math.rutgers.edu}
\author{Jean Van Schaftingen}
\address{Universit\'e catholique de Louvain\\
Institut de Recherche en Math\'ematique et Physique\\
Chemin du Cyclotron 2 bte L7.01.01\\
1348 Louvain-la-Neuve\\
Belgium} 
\email{Jean.VanSchaftingen@uclouvain.be} 
\dedicatory{To Ha\"im Brezis in friendship and admiration}
\DeclareFontFamily{U}{MnSymbolC}{}
\DeclareSymbolFont{MnSyC}{U}{MnSymbolC}{m}{n}
\DeclareFontShape{U}{MnSymbolC}{m}{n}{
    <-6>  MnSymbolC5
   <6-7>  MnSymbolC6
   <7-8>  MnSymbolC7
   <8-9>  MnSymbolC8
   <9-10> MnSymbolC9
  <10-12> MnSymbolC10
  <12->   MnSymbolC12}{}
\DeclareMathSymbol{\intprod}{\mathbin}{MnSyC}{'270}
\newcommand{\defeq}{\triangleq}
\newcommand{\compose}{\,\circ\,}
\newtheorem{proposition}{Proposition}[section]
\newtheorem{theorem}[proposition]{Theorem}
\newtheorem{lemma}[proposition]{Lemma}
\newtheorem{corollary}[proposition]{Corollary}
\theoremstyle{definition}
\theoremstyle{remark}
\numberwithin{equation}{section}
\newcommand{\abs}[1]{{\lvert #1 \rvert}}
\newcommand{\biggabs}[1]{{\biggl\lvert #1 \biggr\rvert}}
\newcommand{\st}{\;:\;}
\newcommand{\ampl}[1]{\langle\!\langle #1 \rangle\!\rangle}
\newcommand{\Rset}{\mathbb{R}}
\newcommand{\Cset}{\mathbb{C}}
\newcommand{\Sset}{\mathbb{S}}
\newcommand{\Bset}{\mathbb{B}}
\newcommand{\dif}{\,\mathrm{d}}
\newcommand{\eofs}{\,}
\begin{document}

\begin{abstract}
We show how the amplitude of holonomies on a vector bundle can be controlled by the integral of the curvature of the connection on a surface enclosed by the curve.
\end{abstract}

\maketitle

\section{Introduction}

Let \(E\) be a vector bundle over the manifold \(M\) with fiber \(F\) endowed with a connection \(\nabla\), that is in a local trivialization \(U \times F \subset F\),
\(\nabla = d + \omega\), where for each point \(p \in U\), \(\omega_p : U \to \operatorname{F} (T_pM \to \mathfrak{gl} (F))\) is the connection form, \(\operatorname{F} (T_p M \to \mathfrak{gl} (F))\) being the space of linear forms from the tangent space \(T_p M\) to the Lie algebra \(\mathfrak{gl} (F)\).
Any map \(\gamma \in C^1 ([0, 1], M)\) defines a parallel transport \(\operatorname{Pt}_\gamma : [0, 1] \to GL (E_p)\) from \(E_{\gamma (0)}\) to \(E_{\gamma (1)}\) as a solution to the problem \(\operatorname{Pt}_\gamma' (t) + \omega_{\gamma (t)}[\gamma' (t)] \operatorname{Pt}_{\gamma (t)} = 0\) and \(\operatorname{Pt}_\gamma (0) = \operatorname{id}\). If \(\gamma (0) = \gamma (1) = p \in M\), \(\operatorname{Pt}_{\gamma} \in GL (E_p)\) is the holonomy of the connection along \(\gamma\).

The holonomy group of the connection \(\nabla\) at \(p\) is the group generated by all the holonomies. A fundamental question is the relationship between the holonomy at a point and the curvature form of the connection which is represented in a local trivialization as
\begin{equation*}
 \Omega = d \omega + \omega \wedge \omega
 \eofs .
\end{equation*}

Algebraically, this is settled by the Ambrose and Singer theorem \cite{Ambrose_Singer_1953}, originating from \'E. Cartan's work \cite{Cartan_1926}*{p.~4} (see also \citelist{\cite{Kobayashi_Nomizu_1963}*{theorem II.8.1}\cite{Reckziegel_Wilhelmus_2006}*{theorem 2}\cite{Sternberg_1964}*{theorem 1.2}\cite{Nijenhuis}*{Theorem 1}}), which states that the identity component of the holonomy group at \(p \in M\) coincides with the group of holonomies along null-homotopic loops and that the corresponding Lie algebra is generated by the images of the curvature form at any point of the connected component of \(p\) in \(M\) and transported parallely at the point \(p\). In particular the Lie algebra corresponding to the normal closure of the holonomy group is generated by the values of the curvature form in all local trivializations.

\bigbreak

We consider here the quantitative corresponding question about how the holonomy can be controlled by the curvature.
More precisely, we assume that the structure group \(G \subset GL (F)\) is endowed with a bi-invariant metric and we define the holonomy amplitude of a curve \(\gamma \in C^1 ([0, 1], M)\) by
\begin{multline}
    \ampl{\gamma}
  =
    \inf
    \,
    \biggl\{
      \int_0^1 \abs{g'}
      \st
        g \in C^1 ([0, 1], G),
        g (0) = \operatorname{id},
        g (1) = \operatorname{Pt}_{\gamma} (1)\\[-1em]
        \text{ and \(g\) and \(\operatorname{Pt}_\gamma\) are homotopic
        relatively to \(\{0, 1\}\)}
    \biggr\}
    \eofs .
\end{multline}
The amplitude depends on the connection, the structure group \(G\) and the metric on \(G\), and is invariant under changes of gauge.
If \(G\) is simply connected (which can in fact always be assumed by replacing the group \(G\) by its universal covering), the amplitude  corresponds to the geodesic distance between the identity \(\operatorname{id}\) and \(\operatorname{Pt}_{\gamma} (b)\).

In the case where \(G\) is an abelian group, then the holonomy amplitude can be computed by the integral formula
\begin{equation*}
   \ampl{\gamma} 
 = 
   \biggabs{\int_0^1 \gamma^\ast  \omega},
\end{equation*}
where \(\gamma^\ast  \omega\) is the pull back of the differential form \(\omega\), defined for each \(t \in [a, b]\) by \(\gamma^\ast  \omega (t)[v] \defeq \omega (\gamma (t))[\gamma' (t)]\).
If \(\sigma \in C^1 (\Bset^2, M)\) and if \(\gamma : [0, 1] \to M\) is defined for \(t \in [0, 1]\) by \(\gamma (t) \defeq \sigma (\cos 2 \pi t, \sin 2 \pi t)\), we have by the Stokes--Cartan formula
\begin{equation}
\label{eq_iizahK3hai}
   \ampl{\gamma} =
   \biggabs{\int_{\Bset^2} \sigma^\ast  d \omega\,}
   =  \biggabs{\int_{\Bset^2} \sigma^\ast  \Omega \,}
   \eofs,
\end{equation}
since the group \(G\) is abelian and thus \(\Omega = d \omega\).
This implies the estimate,
\begin{equation}
  \ampl{\gamma} \le
  \int_{\sigma (\Bset^2)} \abs{\Omega} \dif \mathcal{H}^2
  \eofs,
\end{equation}
where the two-dimensional Hausdorff measure \(\mathcal{H}^2\) is taken with respect to a Riemannian metric on the manifold \(M\) and the norm with respect to the same Riemannian metric and with respect to the metric on the Lie algebra \(\mathfrak{g}\).
If \(M = \Rset^m\), by the isoperimetric inequality \cite{Almgren_1986} this implies that for every closed curve \(\gamma : [0, 1] \to \Rset^m\)
\begin{equation}
\label{eq_fieHiefai0}
  \ampl{\gamma} \le \frac{\operatorname{length} (\gamma)^2}{4 \pi} \sup_M \abs{\Omega}
  \eofs.
\end{equation}

When \(G = U (1) \simeq SO (2)\), the connections are related to electro-magnetic gauge theories and the curvature \(\Omega\) of the connection corresponds to the magnetic field.
Such connections appear in the definition of \emph{magnetic Sobolev spaces} \citelist{\cite{Esteban_Lions_1989}\cite{Lieb_Loss_2001}*{7.19--7.22}\cite{Kato_1972}*{(2.1)}}.
The analysis of magnetic Sobolev spaces should be invariant under gauge transformation, that is, it should not depend on a particular choice of a local trivialization.
In a recent work, Nguyen Hoai-Minh and the second author Jean Van Schaftingen have studied the problem of traces of magnetic Sobolev functions with constructions and estimates that depend only on the curvature of the connection \cite{Nguyen_VanSchaftingen}; a key point in this work was the estimate \eqref{eq_fieHiefai0} for \(U (1)\)--bundles.
A nonabelian gauge-invariant extension of the theory of magnetic Sobolev spaces requires thus new estimates on the holonomy amplitude.

We obtain the following non-abelian version of \eqref{eq_iizahK3hai}.

\begin{theorem}
\label{mainTheorem}
If \(\sigma \in C^1 (\Bset^2, M)\) and \(\gamma : [0, 1] \to M\) is defined for \(t \in [0, 1]\) by \(\gamma (t) \defeq \sigma (\cos 2 \pi t, \sin 2 \pi t)\), then
\begin{equation*}
  \ampl{\gamma} \le
  \int_{\Bset^2} \abs{\sigma^\ast  \Omega}
  \eofs.
\end{equation*}
\end{theorem}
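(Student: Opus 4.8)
The plan is to pull everything back to the disk, to contract the boundary loop to its base point through loops sharing that base point, and then to differentiate the resulting one-parameter family of holonomies, recognising the derivative as a curvature flux. Since \(\Bset^2\) is contractible, the pulled-back bundle is trivial, so I would work in a global trivialisation over \(\Bset^2\) with connection form \(A \defeq \sigma^\ast \omega\) and curvature \(\sigma^\ast \Omega = dA + A \wedge A\); parallel transport for \(\nabla\) along \(\sigma \compose \ell\) then agrees with parallel transport for \(A\) along any curve \(\ell\) in \(\Bset^2\). I would introduce the cone homotopy \(c \in C^1 ([0,1]^2, \Bset^2)\),
\[
  c (s, \tau) \defeq (1 - s)\,(1, 0) + s\,(\cos 2 \pi \tau, \sin 2 \pi \tau),
\]
so that \(\sigma \compose c (1, \cdot) = \gamma\), the loop \(c (0, \cdot)\) is constant, and the base point \(c (s, 0) = c (s, 1) = (1, 0)\) is \emph{fixed} in \(s\). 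The crucial geometric point is that \(c\) is an almost-everywhere bijective parametrisation of \(\Bset^2\) by the segments issued from the boundary point \((1,0)\), with one-signed Jacobian. Writing \(P_s (t)\) for parallel transport for \(A\) along \(c (s, \cdot)\) from \(0\) to \(t\) and \(H (s) \defeq P_s (1)\), I get a path in \(G\) with \(H (0) = \operatorname{id}\) and \(H (1) = \operatorname{Pt}_\gamma (1)\).

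Next I would derive the variation formula. With \(X \defeq A[\partial_\tau c]\) and \(Y \defeq A[\partial_s c]\) along \(c\), Duhamel's formula for \(\partial_\tau P_s = - X P_s\) gives \(\partial_s H = - H \int_0^1 P_s^{-1} (\partial_s X) P_s \dif \tau\). The structure identity \(\partial_s X - \partial_\tau Y = \sigma^\ast \Omega[\partial_s c, \partial_\tau c] + [X, Y]\), combined with \(\partial_\tau (P_s^{-1} Y P_s) = P_s^{-1} (\partial_\tau Y + [X, Y]) P_s\), rewrites the integrand as an exact \(\tau\)-derivative plus a curvature term. Because the base point is fixed, \(\partial_s c\) vanishes at \(\tau \in \{0, 1\}\), so \(Y\) vanishes there and the boundary term drops out, leaving
\[
  H (s)^{-1} \partial_s H (s)
  = - \int_0^1 P_s (\tau)^{-1}\, \sigma^\ast \Omega\bigl[\partial_s c (s, \tau), \partial_\tau c (s, \tau)\bigr]\, P_s (\tau) \dif \tau
  \eofs .
\]

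I would then invoke bi-invariance of the metric, which makes the norm on \(\mathfrak{g}\) invariant under \(\operatorname{Ad}\); hence \(\abs{\partial_s H} = \bigabs{H^{-1} \partial_s H} \le \int_0^1 \bigabs{\sigma^\ast \Omega[\partial_s c, \partial_\tau c]} \dif \tau\), and integrating in \(s\) and changing variables through the bijective parametrisation \(c\) yields
\[
  \int_0^1 \abs{\partial_s H} \dif s
  \le \int_{[0,1]^2} \bigabs{\sigma^\ast \Omega[\partial_s c, \partial_\tau c]} \dif s \dif \tau
  = \int_{\Bset^2} \abs{\sigma^\ast \Omega}
  \eofs .
\]
To see that \(H\) is an admissible competitor, I would show it is homotopic to \(\operatorname{Pt}_\gamma\) relative to \(\{0, 1\}\) using \(\Psi (s, t) \defeq P_s (t)\) on \([0,1]^2\): its bottom and left edges are constant equal to \(\operatorname{id}\), its top edge is \(H\), its right edge is \(\operatorname{Pt}_\gamma\), so the two boundary routes from \(\Psi (0,0) = \operatorname{id}\) to \(\Psi (1,1) = \operatorname{Pt}_\gamma (1)\) are homotopic rel endpoints through \(\Psi\). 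Then \(\ampl{\gamma} \le \int_0^1 \abs{\partial_s H} \le \int_{\Bset^2} \abs{\sigma^\ast \Omega}\).

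The hard part will be the variation formula together with the bookkeeping that forces \emph{exactly} \(\int_{\Bset^2} \abs{\sigma^\ast \Omega}\) on the right-hand side rather than a larger multiple: this is precisely what dictates both the fixed-base-point cone homotopy (so that no conjugation or commutator term survives) and the almost-everywhere bijective parametrisation (so that the area formula gives an equality, not merely an inequality). A secondary, purely technical obstacle is regularity: for \(\sigma \in C^1\) the form \(A\) is only continuous, so \(H\) need not be \(C^1\) a priori and the differentiation above is not immediately licit. I would settle this by first proving the estimate for smooth \(\sigma\), where the variation formula is rigorous, and then approximating a general \(\sigma \in C^1 (\Bset^2, M)\) in the \(C^1\) topology, using that both \(\operatorname{Pt}_\gamma (1)\) and \(\int_{\Bset^2} \abs{\sigma^\ast \Omega}\) vary continuously and that the homotopy class of the competitor is stable under small perturbations.
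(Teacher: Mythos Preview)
Your argument is correct and follows the same underlying strategy as the paper: differentiate a one-parameter family of holonomies filling the disk and identify the derivative with a conjugated curvature flux via a Duhamel/variation-of-constants computation, then integrate and use bi-invariance. The implementations differ, however. The paper foliates \(\Bset^2\) by \emph{concentric circles} \(\gamma_r\); since the base point \((r,0)\) moves with \(r\), the derivative formula acquires the extra commutator-type terms \(\omega(r)[1]\,g_r(1) - g_r(1)\,\omega(r)[1]\), which the paper disposes of by passing to an \emph{axial gauge} (\cref{proposition_axial_gauge}) before pulling back by \(\sigma\). You instead use the \emph{cone from the boundary point} \((1,0)\), so that the base point is fixed, \(\partial_s c\) vanishes at \(\tau\in\{0,1\}\), and the boundary term \([P_s^{-1} Y P_s]_0^1\) drops out automatically---no gauge fixing is needed. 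Your change of variables is legitimate because the Jacobian of \(c\) equals \(2\pi s\,(1-\cos 2\pi\tau)\ge 0\) and \(c\) is a bijection from \((0,1)^2\) onto the open disk, so the area formula yields exactly \(\int_{\Bset^2}\abs{\sigma^\ast\Omega}\) with no overcounting. What your route buys is the elimination of the axial-gauge step and a cleaner verification that the competitor \(H\) lies in the correct relative homotopy class (your square \(\Psi(s,t)=P_s(t)\)); what the paper's route buys is the more familiar polar parametrisation and a derivative formula stated independently of any homotopy choice. Your closing remark on regularity is also well taken: since \(\sigma\in C^1\) gives only a continuous pulled-back connection form, the differentiation in \(s\) (or in \(r\) in the paper's version) is not immediately justified, and your smooth-approximation step closes this gap.
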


Here, \(\sigma^\ast  \Omega\) is a \(\mathfrak{g}\)--valued \(2\)--form and \(\abs{\sigma^\ast  \Omega}\) is the associated \emph{density} \citelist{\cite{Loomis_Sternberg}*{\S 10.3}\cite{Nicolaescu_2007}*{\S 3.4.1}\cite{Folland_1999}*{\S 11.4}}.

\begin{corollary}
\label{corollaryEuclidean}
If \(M = \Rset^m\), if \(\gamma \in C^1([0, 1],  M)\) and if \(\gamma (0) = \gamma (1)\), then
\begin{equation*}
  \ampl{\gamma}
 \le
  \frac{\operatorname{length} (\gamma)^2 \, \sup_M  \abs{\Omega}}{4 \pi}
 \eofs.
\end{equation*}
\end{corollary}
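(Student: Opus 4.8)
The plan is to deduce the estimate from \Cref{mainTheorem} by spanning the closed curve \(\gamma\) with a disk whose area is controlled by the isoperimetric inequality. Write \(L \defeq \int_0^1 \abs{\gamma'}\) for the length of \(\gamma\). Since \(\gamma (0) = \gamma (1)\), identifying \([0, 1] / \{0, 1\}\) with \(\partial \Bset^2\) through \(t \mapsto (\cos 2 \pi t, \sin 2 \pi t)\), the curve \(\gamma\) is a closed curve bounding a disk in \(\Rset^m\). First I would choose, for a given \(\varepsilon > 0\), a filling \(\sigma \in C^1 (\Bset^2, \Rset^m)\) with \(\sigma (\cos 2 \pi t, \sin 2 \pi t) = \gamma (t)\) for every \(t \in [0, 1]\) and with parametrized area \(\int_{\Bset^2} \abs{\partial_1 \sigma \wedge \partial_2 \sigma} \le L^2 / (4 \pi) + \varepsilon\); the existence of a disk spanning \(\gamma\) with area at most \(L^2 / (4 \pi)\) is exactly the sharp isoperimetric inequality for curves in Euclidean space \cite{Almgren_1986}.

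With such a \(\sigma\), \Cref{mainTheorem} gives \(\ampl{\gamma} \le \int_{\Bset^2} \abs{\sigma^\ast \Omega}\), and it remains to bound the integrand pointwise. By definition of the pull-back, \(\sigma^\ast \Omega (x)[e_1, e_2] = \Omega (\sigma (x))[D \sigma (x) e_1, D \sigma (x) e_2]\), so the associated density satisfies
\[
  \abs{\sigma^\ast \Omega} (x)
  \le
  \abs{\Omega (\sigma (x))} \, \abs{\partial_1 \sigma (x) \wedge \partial_2 \sigma (x)}
  \eofs ,
\]
the second factor being the infinitesimal area element of the parametrized surface. Integrating over \(\Bset^2\), estimating \(\abs{\Omega (\sigma (x))} \le \sup_M \abs{\Omega}\) and inserting the isoperimetric bound on the area yields
\[
  \ampl{\gamma}
  \le
  \sup_M \abs{\Omega} \int_{\Bset^2} \abs{\partial_1 \sigma \wedge \partial_2 \sigma}
  \le
  \sup_M \abs{\Omega} \, \Bigl( \frac{L^2}{4 \pi} + \varepsilon \Bigr)
  \eofs .
\]
Letting \(\varepsilon \to 0\) gives the claimed inequality.

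The step I expect to be the main obstacle is the regularity required to invoke \Cref{mainTheorem}: the area-minimizing disk produced by the solution of the Plateau problem need not be of class \(C^1\) up to the boundary, may carry branch points, and need not realize the prescribed boundary parametrization \(t \mapsto \gamma (t)\). I would circumvent this by working with a smooth almost-minimizer rather than the minimizer itself, or by a mollification argument applied simultaneously to \(\gamma\) and to its spanning disk, arranging that the hypotheses of \Cref{mainTheorem} hold exactly while the area stays below \(L^2 / (4 \pi) + \varepsilon\). Since both sides of the desired inequality are continuous under \(C^1\)-convergence of the curve, passing to the limit then recovers the estimate for the original \(\gamma\).
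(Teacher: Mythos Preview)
Your proposal is correct and follows exactly the route the paper takes: deduce the corollary from \Cref{mainTheorem} together with Almgren's isoperimetric inequality, bounding \(\abs{\sigma^\ast \Omega}\) by \(\sup_M \abs{\Omega}\) times the area element. Your treatment is in fact more careful than the paper's one-line remark, since you make explicit the \(\varepsilon\)-approximation and flag the regularity issue for the spanning disk.
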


\Cref{corollaryEuclidean} follows from \cref{mainTheorem} and from the observation that any closed curve \(\gamma\) bounds some minimal surface of area at most \(\frac{1}{4 \pi}(\operatorname{length} (\gamma))^2\) \cite{Almgren_1986}.

The proofs of \cref{mainTheorem} and \cref{corollaryEuclidean} are performed for the curvature in the classical sense, that is when the connection form \(\omega\) is continuously differentiable. One could naturally ask whether the conclusion of \cref{mainTheorem} holds when \(\Omega\) is merely defined in a weak sense \cite{Uhlenbeck_1982} but still continuous, or whether \cref{corollaryEuclidean} holds when the weak curvature is bounded. If \(\sigma\) is a regular parametrization of a surface in \cref{mainTheorem} we can consider the question about suitable traces of the curvature that make the formula valid.

\section{Preliminaries}

\subsection{Properties of the amplitude of holonomy along paths}
We state here some useful properties on the amplitude of holonomies along paths.

\begin{proposition}[Amplitude of concatenated holonomies]
If the metric on \(G\) is left-invariant, then for every
\(\gamma \in C^1([0, 1], M)\) and \(\eta \in C^1([0, 1], M)\) and if \(\gamma (1) = \eta (0)\), then
\begin{equation*}
 \ampl{\gamma \cdot \eta} \le \ampl{\gamma} + \ampl{\eta}
 \eofs.
\end{equation*}
\end{proposition}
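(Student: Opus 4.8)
The plan is to build, out of near-optimal competitors for \(\ampl{\gamma}\) and \(\ampl{\eta}\), a single competitor for \(\ampl{\gamma \cdot \eta}\) whose length is at most the sum, exploiting the multiplicativity of parallel transport together with the left-invariance of the metric. First I would record how parallel transport behaves under concatenation. Writing \(a \defeq \operatorname{Pt}_\gamma (1)\) and \(b \defeq \operatorname{Pt}_\eta (1)\), the linearity in \(\operatorname{Pt}\) of the defining equation \(\operatorname{Pt}' = -\omega [\,\cdot\,] \operatorname{Pt}\) and the uniqueness of its solution show that the transport along \(\gamma \cdot \eta\) issued from \(\operatorname{id}\) equals \(\operatorname{Pt}_\gamma (2t)\) on the first half and \(\operatorname{Pt}_\eta (2t-1)\, a\) on the second half; in particular \(\operatorname{Pt}_{\gamma \cdot \eta} (1) = b\, a\) and, as a path, \(\operatorname{Pt}_{\gamma \cdot \eta}\) is the concatenation of \(\operatorname{Pt}_\gamma\) with \(t \mapsto \operatorname{Pt}_\eta (t)\, a\).

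Given \(\varepsilon > 0\), I would then choose admissible curves \(g, h \in C^1 ([0,1], G)\) with \(g (0) = h (0) = \operatorname{id}\), \(g (1) = a\), \(h (1) = b\), with \(g\) homotopic to \(\operatorname{Pt}_\gamma\) and \(h\) homotopic to \(\operatorname{Pt}_\eta\) relatively to \(\{0,1\}\), and with \(\int_0^1 \abs{g'} \le \ampl{\gamma} + \varepsilon\) and \(\int_0^1 \abs{h'} \le \ampl{\eta} + \varepsilon\). After a smooth monotone reparametrization of each factor whose derivative vanishes at the endpoints, which changes neither the length nor the homotopy class, I define the competitor \(\widehat{g}\) as the concatenation of \(h\) with \(t \mapsto b\, g (t)\); this is a \(C^1\) path from \(\operatorname{id}\) to \(b\, a = \operatorname{Pt}_{\gamma \cdot \eta} (1)\). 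Since the metric is left-invariant, left translation by \(b\) preserves lengths, so \(\int_0^1 \abs{\widehat{g}'} = \int_0^1 \abs{h'} + \int_0^1 \abs{g'} \le \ampl{\gamma} + \ampl{\eta} + 2 \varepsilon\).

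It remains to verify that \(\widehat{g}\) lies in the correct homotopy class, and this is the delicate point. I would establish the purely topological-group fact that, for paths \(\alpha\) from \(\operatorname{id}\) to \(a\) and \(\beta\) from \(\operatorname{id}\) to \(b\) in \(G\), both concatenations \(\beta \cdot (t \mapsto b\, \alpha (t))\) and \(\alpha \cdot (t \mapsto \beta (t)\, a)\) are homotopic relatively to \(\{0,1\}\) to the single pointwise product \(t \mapsto \beta (t)\, \alpha (t)\). The homotopy is obtained by continuously interpolating, inside expressions of the form \(\beta (\,\cdot\,)\, \alpha (\,\cdot\,)\), the staircase schedule of a concatenation to the diagonal schedule of the pointwise product, using only the continuity of multiplication in \(G\). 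Applying this with \(\alpha = \operatorname{Pt}_\gamma\) and \(\beta = \operatorname{Pt}_\eta\) identifies the class of \(\operatorname{Pt}_{\gamma \cdot \eta} = \operatorname{Pt}_\gamma \cdot (t \mapsto \operatorname{Pt}_\eta (t)\, a)\) with that of the pointwise product, while the homotopies \(g \simeq \operatorname{Pt}_\gamma\) and \(h \simeq \operatorname{Pt}_\eta\) identify the class of \(\widehat{g} = h \cdot (t \mapsto b\, g (t))\) with that of \(\operatorname{Pt}_\eta \cdot (t \mapsto b\, \operatorname{Pt}_\gamma (t))\), hence again with the same pointwise product. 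Thus \(\widehat{g}\) is admissible for \(\ampl{\gamma \cdot \eta}\), which gives \(\ampl{\gamma \cdot \eta} \le \ampl{\gamma} + \ampl{\eta} + 2 \varepsilon\); letting \(\varepsilon \to 0\) concludes.

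The length estimate is immediate once left-invariance is invoked, so I expect the main obstacle to be exactly this last homotopy bookkeeping: one must be certain that the glued competitor is not merely of the right length and endpoints but also in the prescribed homotopy class, which is what forces the topological-group lemma equating the two concatenation orders with the pointwise product. A secondary technical point, easily handled by the reparametrization above, is that the raw concatenation of two \(C^1\) paths need not be \(C^1\) at the junction.
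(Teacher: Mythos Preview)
Your argument is correct and rests on the same idea as the paper's: build a competitor for \(\ampl{\gamma\cdot\eta}\) by concatenating near-optimal competitors \(g\) and \(h\), after translating one of them so that endpoints match. The difference is only in the order of concatenation. The paper keeps the order of the parallel transport itself, setting
\[
 f(t)=\begin{cases} g(2t) & t\in[0,\tfrac12],\\ h(2t-1)\,g(1) & t\in[\tfrac12,1],\end{cases}
\]
so that \(f\) has exactly the same shape as \(\operatorname{Pt}_{\gamma\cdot\eta}\); the homotopy \(f\simeq\operatorname{Pt}_{\gamma\cdot\eta}\) is then immediate from \(g\simeq\operatorname{Pt}_\gamma\), \(h\simeq\operatorname{Pt}_\eta\) applied piecewise, and the length estimate comes from invariance under right translation by \(g(1)\). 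You instead concatenate \(h\) with \(t\mapsto h(1)\,g(t)\), which makes the length bound follow directly from the stated \emph{left}-invariance, at the cost of having to argue that the two concatenation orders \(\alpha\cdot(\beta\,a)\) and \(\beta\cdot(b\,\alpha)\) are homotopic rel endpoints via the pointwise product \(t\mapsto\beta(t)\alpha(t)\). That Eckmann--Hilton-style lemma is exactly right, so your route works; it is just slightly longer than the paper's, which sidesteps the homotopy bookkeeping entirely by matching the order of \(\operatorname{Pt}_{\gamma\cdot\eta}\) from the start. (The paper in fact invokes ``right-invariance'' in its last line, so the two constructions really correspond to the two sides of the invariance hypothesis.) Your remark about reparametrizing to ensure \(C^1\) at the junction is a nicety the paper does not make explicit.
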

\begin{proof}
We have by definition of the concatenation
\begin{equation*}
  \gamma \cdot \eta (t)
  =
  \begin{cases}
   \gamma (2t) &\text{if \(t \in [0, \frac{1}{2}]\)},\\
   \eta (2 t - 1) & \text{if \(t \in [\frac{1}{2}, 1]\)}.
  \end{cases}
\end{equation*}
We next observe that \(\operatorname{Pt}_{\gamma \cdot \eta} (t) = \operatorname{Pt}_{\gamma} (2t)\) if \(t \in [0, \frac{1}{2}]\) and \(\operatorname{Pt}_{\gamma \cdot \eta} (t) = \operatorname{Pt}_{\eta (2t - 1)} \compose \operatorname{Pt}_{\gamma (1)}\) if \(t \in [\frac{1}{2}, 1]\).
It follows that if \(g \in C^1 ([0, 1], G)\) and \(h \in C^1([0, 1], G)\) are homotopic to \(\operatorname{Pt}_\gamma\) and \(\operatorname{Pt}_\eta\) relatively to \(\{0, 1\}\), then the map \(f : C^1 ([0, 1], G)\) defined by
\begin{equation*}
f (t)
  \defeq
  \begin{cases}
   g (2t) &\text{if \(t \in [0, \frac{1}{2}]\)},\\
   h (2 t - 1)\, g (1) & \text{if \(t \in [\frac{1}{2}, 1]\)},
  \end{cases}
\end{equation*}
is homotopic to \(\operatorname{Pt}_{\gamma \cdot \eta}\) and the conclusion thus follows by right-invariance of the metric on \(G\).
\end{proof}

\begin{proposition}[Amplitude of conjugate holonomy]
If the metric on \(G\) is right-invariant, then for every
\(\gamma \in C^1 ([0, 1], M)\) and every \(\eta \in C^1 ([0, 1], M)\) such that \(\eta (0) = \gamma (0) = \gamma (1)\), one has
\begin{equation*}
 \ampl{\Bar{\eta} \cdot \gamma \cdot \eta} = \ampl{\gamma}
 \eofs.
\end{equation*}
\end{proposition}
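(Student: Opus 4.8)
The plan is to reduce the statement to the fact that conjugation by a fixed group element is a length-preserving bijection of the admissible paths. Write $a \defeq \operatorname{Pt}_\eta (1)$ and $b \defeq \operatorname{Pt}_\gamma (1)$. First I would compute the holonomy of the conjugated loop: since parallel transport along a reversed path inverts, $\operatorname{Pt}_{\Bar{\eta}} (1) = a^{-1}$, and since parallel transport along a concatenation composes with the later factor on the left (exactly as observed in the proof of the previous proposition), one obtains $\operatorname{Pt}_{\Bar{\eta} \cdot \gamma \cdot \eta} (1) = a\, b\, a^{-1}$. Thus the endpoint of the holonomy of the conjugated loop is the conjugate by $a$ of the endpoint of the holonomy of $\gamma$.

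The heart of the argument is the identification of the homotopy class: I claim that the parallel transport path $\operatorname{Pt}_{\Bar{\eta} \cdot \gamma \cdot \eta}$ is homotopic relatively to $\{0, 1\}$ to the conjugated path $t \mapsto a\, \operatorname{Pt}_\gamma (t)\, a^{-1}$. To see this I would write $\operatorname{Pt}_{\Bar{\eta} \cdot \gamma \cdot \eta}$ explicitly as the concatenation of the three pieces $\operatorname{Pt}_{\Bar{\eta}}$, then $t \mapsto \operatorname{Pt}_\gamma (t)\, a^{-1}$, then $t \mapsto \operatorname{Pt}_\eta (t)\, b\, a^{-1}$, and slide the path $\operatorname{Pt}_\eta : [0, 1] \to G$ from $\operatorname{id}$ to $a$ across the middle factor. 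Equivalently, and more cleanly, I would lift every path to the universal cover $\widetilde{G}$ starting at the identity $\tilde e$; there right translations lift to right translations, so the three pieces compose to a path terminating at $\tilde a\, \tilde b\, \tilde a^{-1}$, while the lift of $t \mapsto a\, \operatorname{Pt}_\gamma (t)\, a^{-1}$ is the lifted conjugation $\tilde a\, \widetilde{\operatorname{Pt}_\gamma}(t)\, \tilde a^{-1}$, which also terminates at $\tilde a\, \tilde b\, \tilde a^{-1}$. Having the same lift endpoint, the two paths are homotopic relatively to their endpoints. This step, with the bookkeeping of reversal and concatenation, is where I expect the main difficulty.

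With the homotopy class pinned down, I would define the map $g \mapsto a\, g\, a^{-1}$ from the paths admissible for $\gamma$ to the paths admissible for $\Bar{\eta} \cdot \gamma \cdot \eta$. It sends $\operatorname{id}$ to $\operatorname{id}$ and $b$ to $a\, b\, a^{-1}$; and since $g$ is homotopic to $\operatorname{Pt}_\gamma$ relatively to $\{0, 1\}$ and conjugation is continuous, $a\, g\, a^{-1}$ is homotopic to $t \mapsto a\, \operatorname{Pt}_\gamma (t)\, a^{-1}$, which by the previous step is homotopic to $\operatorname{Pt}_{\Bar{\eta} \cdot \gamma \cdot \eta}$; hence the image is indeed admissible. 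The conjugation $h \mapsto a\, h\, a^{-1}$ is the composition of a left and a right translation, so it is an isometry for the bi-invariant metric and therefore preserves lengths of paths, giving $\int_0^1 \abs{(a\, g\, a^{-1})'} = \int_0^1 \abs{g'}$.

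Finally, the map $\tilde g \mapsto a^{-1}\, \tilde g\, a$ provides, by the same reasoning, an inverse; so $g \mapsto a\, g\, a^{-1}$ is a length-preserving bijection between the two classes of admissible paths. Taking the infimum of $\int_0^1 \abs{\cdot'}$ over each class yields $\ampl{\Bar{\eta} \cdot \gamma \cdot \eta} = \ampl{\gamma}$. The essential ingredient beyond the previous proposition is that the relevant translations on $G$ are isometries, so that conjugation neither lengthens nor shortens competitor paths.
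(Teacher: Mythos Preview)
Your argument is correct and takes a somewhat different route from the paper. The paper writes down an explicit three-piece homotopy \(H(s,t)\) in \(G\) interpolating between the parallel-transport path of \(\Bar{\eta}\cdot\gamma\cdot\eta\) and a translate of the competitor \(g\); you instead conjugate competitor paths by the fixed element \(a=\operatorname{Pt}_\eta(1)\) and pin down the homotopy class by lifting to the universal cover. Your approach is cleaner and makes the mechanism transparent: \(g\mapsto a\,g\,a^{-1}\) is a bijection between the two classes of admissible competitors, and it preserves length because conjugation is an isometry. The paper's explicit homotopy buys a hands-on construction but at the cost of bookkeeping that is easy to get wrong; your universal-cover argument trades that for a one-line endpoint computation.

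One point worth flagging: the proposition is stated under the hypothesis of a \emph{right}-invariant metric, but your length-preservation step uses that conjugation by \(a\) is an isometry, which requires \emph{bi}-invariance. Right-invariance alone gives only \(d(\tilde e,\,\tilde a\,\tilde b\,\tilde a^{-1})=d(\tilde a,\,\tilde a\,\tilde b)\) in the universal cover, and passing from there to \(d(\tilde e,\tilde b)\) needs left-invariance as well. Since the paper's standing hypothesis is a bi-invariant metric on \(G\), this is an understatement in the proposition's wording rather than a gap in your proof; indeed, without bi-invariance the identity can fail.
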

\begin{proof}
  Assume that \(g \in C^1 ([0, 1], G)\) is homotopic to \(\operatorname{Pt}_\gamma\) relatively to \(\{0, 1\}\) and that \(h \in C^1 ([0, 1], G)\) is homotopic to \(\operatorname{Pt}_\eta\) relatively to \(\{0, 1\}\).
We construct the map \(H : [0, 1] \times [0, 1] \to M\) by setting for \((s, t) \in [0, 1] \times [0, 1]\),
\begin{equation*}
 H (s, t)
 \defeq
 \begin{cases}
      h (1 - 3s  - t) \,h (1 - t)^{-1}    &\text{if \(0 \le s \le \frac{1 - t}{3}\)},\\
     g (\frac{3 s + t - 1}{1 + 2t}) \,h (1) \,h (1 - t)^{-1}  &\text{if \(\frac{1 -t}{3} \le s \le \frac{2 + t}{3}\)},\\
     h (3s - 2 - t)\, g (1)\, h (0)\, h (1 - t)^{-1} & \text{if \(\frac{2 + t}{3} \le s \le 1\)}.
 \end{cases}
\end{equation*}
We conclude thus that \(g\) is homotopic to \(\operatorname{Pt}_{\Bar{\eta} \cdot \gamma \cdot \eta}\) and thus the conclusion follows.
\end{proof}

\subsection{Axial gauge}

Our analysis will be facilitated by working with a trivialization that corresponds to the \emph{axial gauge}, also known as \emph{Arnowitt--Fickler gauge} \citelist{\cite{Itzykson_Zuber_1980}*{12-1-1}\cite{Arnowitt_Fickler_1962}}.

\begin{proposition}
\label{proposition_axial_gauge}
For every pont \(p \in M\) and every \(v \in \Rset^m\), there exists a local trivialization \(U \times F\) such \(p \in U\) and \(v \intprod \omega= 0\) everywhere in \(U\) in this local trivialization.
\end{proposition}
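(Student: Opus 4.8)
The plan is to obtain the axial trivialization from an arbitrary one by a gauge transformation, and to build the gauge by integrating a linear ordinary differential equation in the direction of \(v\). If \(v = 0\) there is nothing to prove, so I assume \(v \neq 0\) and fix an arbitrary local trivialization \(U_0 \times F\) with \(p \in U_0\), together with coordinates on \(U_0\) centred at \(p\) in which the constant vector field associated to \(v\) is the first coordinate field. Writing \(\omega = \sum_{i=1}^m \omega_i \dif x_i\) with each \(\omega_i : U_0 \to \mathfrak{g}\) of class \(C^1\), the requirement \(v \intprod \omega = 0\) becomes simply \(\omega_1 = 0\) in the new trivialization.

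The gauge \(g\) will be defined on a box \(U = (-\delta, \delta)^m \subset U_0\) as the solution of
\begin{equation*}
  \partial_1 g (x_1, x') = - \omega_1 (x_1, x')\, g (x_1, x'),
  \qquad g (0, x') = \operatorname{id},
\end{equation*}
where \(x' = (x_2, \dots, x_m)\) are treated as parameters. Because this equation is linear in \(g\), its solution does not blow up and exists on all of \(U\). I would then check two points. First, \(g\) is valued in \(G\): since \(\omega_1\) takes values in \(\mathfrak{g}\) and the tangent space to \(G\) at any \(h \in G\) is \(\mathfrak{g}\, h\), the right-hand side \(-\omega_1 g\) is tangent to \(G\) along \(G\), so the trajectory issued from \(\operatorname{id} \in G\) remains in \(G\) by uniqueness. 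Second, \(g \in C^1 (U, G)\): as \(\omega\) is \(C^1\), the right-hand side is continuous in \(x_1\) and continuously differentiable in \((g, x')\), and the theorem on the differentiable dependence of solutions of ordinary differential equations on parameters yields that \(g\) is jointly \(C^1\).

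It remains to verify that the chosen gauge does the job. With the transformation rule \(\omega^g = g^{-1} \omega g + g^{-1} \dif g\), contracting with \(v\) and using \(v \intprod \dif g = \partial_1 g\) gives
\begin{equation*}
  v \intprod \omega^g = g^{-1} \omega_1 g + g^{-1} \partial_1 g = g^{-1} \omega_1 g - g^{-1} \omega_1 g = 0
  \eofs,
\end{equation*}
so \(U \times F\) endowed with the trivialization transformed by \(g\) is the desired axial gauge. I expect the only genuinely delicate steps to be these two verifications: that the solution stays inside the structure group \(G\) rather than merely in \(GL (F)\), and that it inherits full \(C^1\) regularity jointly in all variables from the \(C^1\) regularity of \(\omega\) — the latter relying on the parameter-dependence theorem rather than on the one-variable theory alone.
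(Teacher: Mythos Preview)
Your proof is correct and follows essentially the same route as the paper: start from an arbitrary trivialization, build the gauge \(g\) by solving the parallel-transport ODE \(\partial_v g + \omega[v]\,g = 0\) along each line parallel to \(v\), and compose. The paper's version is terser---it simply says ``this can be done by parallel transport on every straight line parallel to the vector \(v\)''---while you have spelled out the verifications that \(g\) remains in \(G\) and is jointly \(C^1\), but the underlying argument is the same.
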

Here \(v \intprod \omega\) denotes the \emph{interior multiplication} (or \emph{contraction}) of the form \(\omega\) by the vector \(v\):
\(
    v \intprod \omega (x)
  \defeq
    \omega (x)[v] \in \mathfrak{g}
\), which is also denoted by \(i_v \omega\).

When \(M = \Rset^m\), \(F = \Cset\), \(G = U (1)\) and \(v  \in \Rset^m\) is a fixed vector,
then the connection form \(\omega\) can be described by setting \(\omega (w) = i A \cdot w\) for some vector field \(A : \Rset^n \to \Rset^n\) and for every \(w \in \Rset^m\), and then the axial gauge prescribes that the component \(A \cdot v\) of the vector field \(A\) vanishes everywhere.
The axial gauge does not fix the curvature form in directions transversal to \(v\).

\begin{proof}[Proof of \cref{proposition_axial_gauge}]
Let \(\Tilde{\Phi} : U \times F \to B\) be a local trivialization of the bundle \(B\) and \(U\) is a ball. That is \(\Phi\) is a diffeomorphism and \(\Tilde{\Phi}\) is linear on each fiber.
Let \(\Tilde{\omega}\) be the connection form on \(U\).
We define now a function \(g : U \to G\) by the condition that \(v \intprod (dg + \Tilde{\omega} g) = (dg + \Tilde{\omega} g)[v] = 0\). This can be done by parallel transport on every straight line parallel to the vector \(v\).
We conclude by considering the map
\(\Phi \defeq \Tilde{\Phi} \compose g\).
\end{proof}

\section{Derivative of the holonomy}

We define for \(r > 0\), the path \(\gamma_r : [0, 1] \to \Rset^2\) for each \(t \in [0, 1]\) by \(\gamma_r (t) \defeq (r\cos 2 \pi t, r \sin 2 \pi t)\).
We compute the holonomy on a circle of radius \(r > 0\) by finding a function \(g \in C^1 ([0,1], G)\) that satisfies the equation
\begin{equation}
\label{eq_rae2Yeengo}
 \left\{
 \begin{aligned}
   g'_r (t) + 2 \pi \, r \omega (r e^{2 \pi i t})[i e^{2 \pi i t}] g_r (t) &= 0
   & & \text{for \(t \in [0, 1]\)},\\
   g_r (0) & = \operatorname{id},
 \end{aligned}
 \right.
\end{equation}
where the plane \(\Rset^2\) is identified with the field of complex numbers,
so that \(e^{2 \pi i t} = (\cos 2 \pi t, \sin 2 \pi t)\) and
\(i e^{2 \pi i t} = (-\sin 2 \pi t, \cos 2 \pi t)\).
The holononomy at \((r, 0)\) is then given by \(g_r (1)\).

The core of the proof of \cref{mainTheorem} lies in the following derivative formula.

\begin{lemma}
If \(B_R \subset \Rset^2\) and if \(B_R \times F\) is a vector bundle, then for each \(r \in (0, R)\) one has
\begin{equation*}
    \frac{d}{dr} g_r (1) - \omega (r)[1] \,g_r (1) + g_r (1)\, \omega (r) [1]
  =
    2 \pi \, r
    \int_0^{1} g_r (1) \,g_r (t)^{-1} \, \Omega (r e^{2 \pi i t})[e^{2 \pi i t}, i e^{2 \pi i t}] \,g_r (t)\dif  t
    \eofs .
\end{equation*}

\end{lemma}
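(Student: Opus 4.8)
The plan is to view \(g_r\) as the fundamental solution in the variable \(t\) of the linear equation \eqref{eq_rae2Yeengo} and to differentiate its endpoint value \(g_r (1)\) with respect to the radius \(r\). It is convenient to work with the polar parametrization \(\sigma (r, s) \defeq r e^{2 \pi i s}\) and to decompose the pullback as \(\sigma^\ast \omega = P \dif r + Q \dif s\), where the radial component is \(P (r, s) = \omega (r e^{2 \pi i s})[e^{2 \pi i s}]\) and the angular component \(Q (r, s) = 2 \pi r\, \omega (r e^{2 \pi i s})[i e^{2 \pi i s}]\) is exactly the coefficient of the holonomy equation, so that \(\partial_s g_r = - Q\, g_r\) and \(g_r (0) = \operatorname{id}\). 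Observe that \(P (r, 0) = P (r, 1) = \omega (r)[1]\) precisely because \(\gamma_r\) is a closed loop; this is what will produce the conjugation correction recorded on the left-hand side.

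First I would record the parameter-differentiation formula for the fundamental solution. Since \(s \mapsto g_r (1)\, g_r (s)^{-1}\) is the transition operator of the homogeneous equation, variation of parameters yields
\begin{equation*}
  \frac{d}{dr} g_r (1)
  =
  - \int_0^1 g_r (1)\, g_r (s)^{-1}\, \partial_r Q (r, s)\, g_r (s) \dif s
  \eofs,
\end{equation*}
which reduces the whole computation to rewriting the single quantity \(\partial_r Q\).

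The decisive step is to invoke the Cartan structure equation after pullback. From \(\sigma^\ast \Omega = d (\sigma^\ast \omega) + \sigma^\ast \omega \wedge \sigma^\ast \omega\) and comparison of the coefficients of \(\dif r \wedge \dif s\) one gets
\begin{equation*}
  \sigma^\ast \Omega (\partial_r, \partial_s)
  =
  \partial_r Q - \partial_s P + [P, Q]
  \eofs,
\end{equation*}
whereas directly \(\sigma^\ast \Omega (\partial_r, \partial_s) = 2 \pi r\, \Omega (r e^{2 \pi i s})[e^{2 \pi i s}, i e^{2 \pi i s}]\). Solving for \(\partial_r Q\) and inserting it into the formula above splits the integrand into the curvature term \(2 \pi r\, \Omega [e^{2 \pi i s}, i e^{2 \pi i s}]\), which after conjugation by \(g_r\) is exactly the asserted right-hand side, and the remaining terms \(\partial_s P - [P, Q]\).

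Finally I would recognize that these remaining terms form a total \(s\)-derivative once transported by the holonomy: a direct computation using \(\partial_s g_r = - Q\, g_r\) gives
\begin{equation*}
  g_r (s)^{-1} \bigl( \partial_s P - [P, Q] \bigr) g_r (s)
  =
  \frac{d}{ds} \Bigl( g_r (s)^{-1}\, P (r, s)\, g_r (s) \Bigr)
  \eofs,
\end{equation*}
so that the commutator \([P, Q]\) produced by moving \(P\) through \(g_r\) cancels the commutator coming from the structure equation. Integrating this total derivative over \(s \in [0, 1]\) leaves only the boundary values at \(s = 0\) and \(s = 1\); since \(g_r (0) = \operatorname{id}\) and \(P (r, 0) = P (r, 1) = \omega (r)[1]\), these furnish exactly the correction terms \(- \omega (r)[1]\, g_r (1) + g_r (1)\, \omega (r)[1]\) that are gathered on the left-hand side of the statement, and collecting the three contributions gives the identity. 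The main obstacle is precisely this non-abelian bookkeeping: one must carry along both commutators and verify their cancellation, and fix the signs of the correction and of the curvature integral through the orientation of \(\gamma_r\) and the convention \(\Omega = d \omega + \omega \wedge \omega\). As a consistency check, in the abelian case all commutators vanish and the formula reduces to differentiating \(g_r (1) = \exp ( - \int_{\Bset^2_r} \Omega )\) via the Stokes--Cartan formula in polar coordinates.
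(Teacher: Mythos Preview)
Your proposal is correct and follows essentially the same route as the paper: both differentiate the holonomy ODE in \(r\), apply variation of parameters to express \(\partial_r g_r(1)\) as an integral involving \(\partial_r Q\), and then reorganize to isolate the curvature and the boundary correction. The only difference is packaging: the paper writes \(2\pi i e^{2\pi i t} = \frac{d}{dt} e^{2\pi i t}\), integrates by parts in \(t\), and recognizes the resulting four terms as \(d\omega + \omega\wedge\omega\), whereas you invoke the pulled-back structure equation \(\partial_r Q = \sigma^\ast\Omega(\partial_r,\partial_s) + \partial_s P - [P,Q]\) and then recognize \(g_r^{-1}(\partial_s P - [P,Q])g_r\) as a total \(s\)-derivative---the same integration by parts run in the opposite order.
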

\begin{proof}
We define \(h_r (s)\defeq \frac{\partial}{\partial r} g_r (s)\).
In view of the holonomy equation \eqref{eq_rae2Yeengo}, the function \(h_r\) satisfies the system
\begin{equation*}
\left\{
 \begin{aligned}
    \frac{h_r' (t)}{2 \pi}
  +
     r \omega (r e^{2 \pi i t})[i e^{2 \pi i t}] \, h_r (t)
    &=
      - (
    \omega (r e^{2 \pi i t})
      + r \partial_r \omega (r e^{2 \pi i t}))[i e^{2 \pi i t}]
      \,g_r (t)
      & & \text{for \(t \in [0, 1]\)},\\
    h_r (0) & = 0
    \eofs.
\end{aligned}
\right.
\end{equation*}

By variation of parameters for solutions of differential equations (see for example \cite{Hartmam_1964}*{Corollary 2.1}), we have for each \(r \in (0, R)\),
\begin{equation*}
  h_r (1)
  =
  - 2 \pi \int_0^{1}
    g_r (1) \,g_r (t)^{-1} \,
    \biggl(\omega (r e^{2 \pi i t}) + r \, \frac{\mathrm{d}}{\mathrm{d}r} \omega (r e^{2 \pi i t})\biggr)[i e^{2 \pi i t}]  \, g_r (t) \dif  t
    \eofs.
\end{equation*}
We note that
\begin{multline*}
 2 \pi \int_0^{1}g_r (1) \, g_r (t)^{-1} \, \omega (r e^{2 \pi i t})[i e^{2 \pi i t}] \, g_r (t) \dif  t\\
 = \int_0^{1} g_r (1) \, g_r (t)^{-1} \, \omega (r e^{2 \pi i t})\biggl[\frac{\mathrm{d}}{\mathrm{d}t} \bigl(e^{2 \pi i t}\bigr)\biggr] \, g_r (t) \dif  t.
 \end{multline*}
Integrating by parts the term on the right-hand side we have,
\begin{equation*}
\begin{split}
 2 \pi \int_0^{1}g_r (1) \, g_r (t)^{-1} &\, \omega (r e^{2 \pi i t})[i e^{2 \pi i t}] \, g_r (t) \dif  t
 \\
 = &\,\omega (r)[1] \, g_r (1) - g_r (1) \, \omega (r) [1]\\
 &-2 \pi \int_0^{1} g_r (1)\, g_r (t)^{-1}\, \omega (r e^{2 \pi i t})[r i e^{2 \pi i t}]
 \,
 \omega (r e^{2 \pi i t})[e^{2 \pi i t}]
 \,
 g_r (t) \dif  t\\
 &- 2 \pi \int_0^{1} g_r (1) \, g_r (t)^{-1} \, \biggl(\frac{\mathrm{d}}{\mathrm{d}t} \omega (r e^{2 \pi i t})\biggr)[e^{2 \pi i t}]
 \,
 \omega (r e^{2 \pi i t})[e^{2 \pi i t}]
 \,
 g_r (t) \dif  t\\
  & + 2 \pi \int_0^{1}
  g_r (1) \, g_r (t)^{-1} \, \omega (r e^{2 \pi i t})[e^{2 \pi i t}]\,
 \omega (r e^{2 \pi i t})[r i e^{2 \pi i t}] \, g_r (t) \dif  t.
\end{split}
\end{equation*}
We conclude that
\vspace{1ex}
\begin{equation*}
\begin{split}
  h_r &(1)\\[-1ex]
  &
  =
  - 2 \pi \, r \int_0^{1}g_r (1) \, g_r (t)^{-1} \, \bigl(d \omega (r e^{2 \pi i t}) + \omega (r e^{2 \pi i t}) \wedge \omega (r e^{2 \pi i t})\bigr) [e^{2 \pi i t}, i e^{2 \pi i t}] \, g_r (t) \dif  t
  \\
  &=
  -2 \pi \, r \int_0^{1} g_r (1) \, g_r (t)^{-1}\, \Omega (r e^{2 \pi i t})[e^{2 \pi i t}, i e^{2 \pi i t}] \, g_r (t)\dif  t
  \eofs.
  \qedhere
\end{split}
\end{equation*}
\end{proof}

By placing ourselves, in view of \cref{proposition_axial_gauge}, in an axial gauge with respect to the vector \(v = (0, 1) \in \Rset^2 \), we obtain the formula
\begin{equation*}
    \frac{\mathrm{d}}{\mathrm{d}r} g_r (1)
  =
    2 \pi \, r \int_0^{1} g_r (1) \, g_r (t)^{-1}\, \Omega (r e^{2 \pi i t})[e^{2 \pi i t}, i e^{2 \pi i t}] g_r (t)\dif  t
    \eofs .
\end{equation*}
and we deduce since the metric is bi-invariant,

\begin{proposition}
\label{proposition_amplitude_derivative}
If \(B_R \subset \Rset^2\) and if \(B_R \times F\) is a vector bundle, then for every \(r \in (0, R)\),
\begin{equation*}
  \limsup_{s \to r} \frac{\abs{\ampl{\gamma_r}- \ampl{\gamma_s}}}{\abs{r - s}}
  \le r \int_{\Sset^1} \abs{\Omega (r e^{i\theta})}.
\end{equation*}
\end{proposition}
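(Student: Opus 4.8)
The plan is to deduce the estimate from the derivative formula displayed just above the statement, by first controlling the norm of the velocity of the curve \(r \mapsto g_r (1)\) in \(G\) and then relating the amplitude to the arclength of this curve. Throughout I work in the axial gauge with respect to \(v = (0,1)\), which is legitimate since \(\ampl{\cdot}\) and the density \(\abs{\Omega}\) are gauge-invariant.

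First I would estimate \(\bigabs{\frac{\mathrm{d}}{\mathrm{d}r} g_r (1)}\). Writing \(\frac{\mathrm{d}}{\mathrm{d}r} g_r (1) = g_r (1)\, \xi_r\) with \(\xi_r \defeq g_r (1)^{-1} \frac{\mathrm{d}}{\mathrm{d}r} g_r (1) \in \mathfrak{g}\), the derivative formula gives
\[
  \xi_r = 2 \pi \, r \int_0^1 g_r (t)^{-1}\, \Omega (r e^{2 \pi i t})[e^{2 \pi i t}, i e^{2 \pi i t}]\, g_r (t) \dif t.
\]
Since the metric on \(G\) is left-invariant, \(\bigabs{\frac{\mathrm{d}}{\mathrm{d}r} g_r (1)} = \abs{\xi_r}\); since it is right-invariant, the adjoint action of \(g_r (t)\) is an isometry of \(\mathfrak{g}\), so the conjugation inside the integral does not change the norm. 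Because \((e^{2 \pi i t}, i e^{2 \pi i t})\) is a positively oriented orthonormal frame of \(\Rset^2\), the integrand has norm equal to the density \(\abs{\Omega (r e^{2 \pi i t})}\), and after the substitution \(\theta = 2 \pi t\),
\[
  \Bigabs{\frac{\mathrm{d}}{\mathrm{d}r} g_r (1)} \le 2 \pi \, r \int_0^1 \abs{\Omega (r e^{2 \pi i t})} \dif t = r \int_{\Sset^1} \abs{\Omega (r e^{i \theta})}.
\]

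Next I would bound \(\abs{\ampl{\gamma_r} - \ampl{\gamma_s}}\) by the length of the arc \(\alpha (\rho) \defeq g_\rho (1)\), with \(\rho\) ranging between \(s\) and \(r\). Given \(\varepsilon > 0\), choose \(g \in C^1 ([0,1], G)\) homotopic to \(\operatorname{Pt}_{\gamma_s} = g_s\) relatively to \(\{0,1\}\) with \(\int_0^1 \abs{g'} \le \ampl{\gamma_s} + \varepsilon\), and concatenate it with \(\alpha\) (smoothing the junction so that the resulting path is \(C^1\), at no cost in length, since the endpoints already match). The delicate point, which I expect to be the main obstacle, is to verify that this competitor lies in the homotopy class of \(\operatorname{Pt}_{\gamma_r} = g_r\): the map \((\rho, t) \mapsto g_\rho (t)\) is a homotopy from \(g_s\) to \(g_r\) that fixes the basepoint \(g_\rho (0) = \operatorname{id}\) while sliding the endpoint \(g_\rho (1)\) along \(\alpha\); filling the corresponding square shows that \(g_s \cdot \alpha \simeq g_r\) relatively to \(\{0,1\}\), and hence \(g \cdot \alpha \simeq g_r\) as well. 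Thus \(g \cdot \alpha\) is admissible for \(\ampl{\gamma_r}\); letting \(\varepsilon \to 0\) and exchanging the roles of \(r\) and \(s\) yields
\[
  \abs{\ampl{\gamma_r} - \ampl{\gamma_s}} \le \Bigabs{\int_s^r \Bigabs{\frac{\mathrm{d}}{\mathrm{d}\rho} g_\rho (1)} \dif \rho}.
\]

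Finally I would combine the two estimates. Since \(\omega\) is continuously differentiable, \(\Omega\) is continuous and the function \(\rho \mapsto \rho \int_{\Sset^1} \abs{\Omega (\rho e^{i \theta})}\) is continuous on \((0, R)\) (the density is uniformly continuous on a compact annulus). Dividing the previous inequality by \(\abs{r - s}\) and applying the fundamental theorem of calculus as \(s \to r\) gives
\[
  \limsup_{s \to r} \frac{\abs{\ampl{\gamma_r} - \ampl{\gamma_s}}}{\abs{r - s}} \le r \int_{\Sset^1} \abs{\Omega (r e^{i \theta})},
\]
which is the claimed estimate.
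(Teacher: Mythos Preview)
Your argument is correct and is exactly the approach the paper has in mind: it merely records the simplified derivative formula in the axial gauge and the phrase ``we deduce since the metric is bi-invariant,'' leaving the details to the reader. You have spelled out precisely those details---the bi-invariance bound on \(\bigl|\tfrac{d}{dr} g_r(1)\bigr|\) and, in particular, the homotopy argument (via the square \((\rho,t)\mapsto g_\rho(t)\)) showing that \(g\cdot\alpha\) is an admissible competitor for \(\ampl{\gamma_r}\)---so nothing is missing.
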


We then obtain as a consequence.

\begin{proposition}
\label{proposition_amplitude_derivative}
If \(B_R \subset \Rset^2\) and if \(B_R \times F\) is a vector bundle, then
\begin{equation*}
    \ampl{\gamma_R}
  \le
    \int_{B_R} \abs{\Omega}.
\end{equation*}
\end{proposition}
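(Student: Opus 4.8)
The plan is to integrate the pointwise estimate of the preceding proposition in the radial variable. Set \(f(r) \defeq \ampl{\gamma_r}\) and \(g(r) \defeq r\int_{\Sset^1}\abs{\Omega(re^{i\theta})}\) for \(r \in [0,R]\); since \(\omega\) is continuously differentiable, \(\Omega\) is continuous and \(g\) is continuous, hence bounded, on the compact interval \([0,R]\). The preceding estimate asserts that the pointwise Lipschitz constant
\begin{equation*}
 \limsup_{s\to r}\frac{\abs{f(r)-f(s)}}{\abs{r-s}}
\end{equation*}
is bounded by \(g(r)\) at every \(r \in (0,R)\). Since \(g\) is bounded, \(f\) is therefore (locally, and in fact globally) Lipschitz on \((0,R)\); in particular it is absolutely continuous, extends continuously to \([0,R]\), and is differentiable almost everywhere with \(\abs{f'(r)} \le g(r)\) wherever the derivative exists.

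First I would establish the boundary value \(\lim_{r\to 0^+} f(r) = 0\). The parallel transport \(\operatorname{Pt}_{\gamma_r}\) is itself admissible in the infimum defining \(\ampl{\gamma_r}\), being trivially homotopic to itself relatively to \(\{0,1\}\) and having the required endpoints, so \(f(r) \le \int_0^1 \abs{\operatorname{Pt}_{\gamma_r}'}\). From the defining equation of the parallel transport one has \(\operatorname{Pt}_{\gamma_r}'(t)\operatorname{Pt}_{\gamma_r}(t)^{-1} = -\omega_{\gamma_r(t)}[\gamma_r'(t)]\), and the bi-invariance of the metric then gives \(\abs{\operatorname{Pt}_{\gamma_r}'(t)} = \abs{\omega_{\gamma_r(t)}[\gamma_r'(t)]} \le 2\pi r \sup_{\overline{B_r}}\abs{\omega}\), since \(\abs{\gamma_r'(t)} = 2\pi r\). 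As \(\omega\) is continuous near the origin, this bound is \(O(r)\) and hence \(f(r) \to 0\).

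With both boundary facts in hand, the fundamental theorem of calculus for absolutely continuous functions yields
\begin{equation*}
 \ampl{\gamma_R} = f(R) = \lim_{r\to 0^+} f(r) + \int_0^R f'(r)\dif r \le \int_0^R g(r)\dif r,
\end{equation*}
where the identity \(f(R) = \ampl{\gamma_R}\) relies on the continuity of \(r\mapsto\ampl{\gamma_r}\) up to \(r = R\) (or merely its lower semicontinuity from the left), which follows from the continuous dependence of the holonomy \(\operatorname{Pt}_{\gamma_r}(1)\) on the radius. Finally I would recognize the right-hand side as the stated integral: choosing at each point \(re^{i\theta}\) the positively oriented orthonormal frame \((e^{i\theta}, ie^{i\theta})\), the density is \(\abs{\Omega(re^{i\theta})} = \abs{\Omega(re^{i\theta})[e^{i\theta}, ie^{i\theta}]}\), and the change of variables to polar coordinates gives
\begin{equation*}
 \int_0^R g(r)\dif r = \int_0^R\int_0^{2\pi}\abs{\Omega(re^{i\theta})}\,r\dif\theta\dif r = \int_{B_R}\abs{\Omega}.
\end{equation*}

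The main obstacle is not a single deep step but the analytic bookkeeping that turns the pointwise \(\limsup\) bound into a bona fide absolute-continuity statement to which the fundamental theorem of calculus applies, together with the two limiting passages: the vanishing of the amplitude as the loop contracts to a point when \(r \to 0^+\), and the continuity of the amplitude up to \(r = R\). Once these are secured, the conclusion is a one-line integration followed by a routine change of variables.
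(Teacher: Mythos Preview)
Your proposal is correct and is exactly the natural way to deduce the proposition from the preceding \(\limsup\) estimate; the paper itself gives no details beyond the sentence ``We then obtain as a consequence,'' and your radial integration is the intended argument. The only point worth a word of justification is the passage from a uniformly bounded pointwise Lipschitz constant to a global Lipschitz bound on \((0,R)\) (e.g., via the mean value inequality for Dini derivatives applied to a function already seen to be continuous), but this is standard real analysis and you rightly flag it as bookkeeping.
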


\Cref{mainTheorem}
follows then by applying a pull-back to the curvature.

In the framework of weak connections, a natural generalization of \cref{proposition_amplitude_derivative} would be the case where \(\omega \in W^{1, 4} (B_R)\) so that \(\Omega \in L^1 (B_R)\) \cite{Uhlenbeck_1982}*{Lemma 1.1}.

\begin{bibdiv}
\begin{biblist}

\bib{Almgren_1986}{article}{
   author={Almgren, F.},
   title={Optimal isoperimetric inequalities},
   journal={Indiana Univ. Math. J.},
   volume={35},
   date={1986},
   number={3},
   pages={451--547},
   issn={0022-2518},
   doi={10.1512/iumj.1986.35.35028},
}

\bib{Ambrose_Singer_1953}{article}{
   author={Ambrose, W.},
   author={Singer, I. M.},
   title={A theorem on holonomy},
   journal={Trans. Amer. Math. Soc.},
   volume={75},
   date={1953},
   pages={428--443},
   issn={0002-9947},
   doi={10.2307/1990721},
}

\bib{Arnowitt_Fickler_1962}{article}{
   author={Arnowitt, R. L.},
   author={Fickler, S. I.},
   title={Quantization of the Yang--Mills field},
   journal={Phys. Rev. (2)},
   volume={127},
   date={1962},
   pages={1821--1829},
}

\bib{Cartan_1926}{article}{
  title={Les groupes d'holonomie des espaces généralisés},
  author={Cartan, \'Elie},
  journal={Acta Math.},
  volume={48},
  number={1--2},
  date={1926},
  pages={1--42},
  doi={10.1007/BF02629755},
}

\bib{Esteban_Lions_1989}{article}{
   author={Esteban, Maria J.},
   author={Lions, Pierre-Louis},
   title={Stationary solutions of nonlinear Schr\"{o}dinger equations with an
   external magnetic field},
   conference={
      title={Partial differential equations and the calculus of variations,
      Vol. I},
   },
   book={
      series={Progr. Nonlinear Differential Equations Appl.},
      volume={1},
      publisher={Birkh\"{a}user},
      address={Boston, Mass.},
   },
   date={1989},
   pages={401--449},
}

\bib{Folland_1999}{book}{
   author={Folland, Gerald B.},
   title={Real analysis},
   series={Pure and Applied Mathematics},
   edition={2},
   subtitle={Modern techniques and their applications},
   publisher={John Wiley \& Sons},
   address={New York},
   date={1999},
   pages={xvi+386},
   isbn={0-471-31716-0},
}

\bib{Hartmam_1964}{book}{
   author={Hartman, Philip},
   title={Ordinary differential equations},
   publisher={John Wiley \& Sons, Inc., New York-London-Sydney},
   date={1964},
   pages={xiv+612},
}

\bib{Itzykson_Zuber_1980}{book}{
   author={Itzykson, Claude},
   author={Zuber, Jean-Bernard},
   title={Quantum field theory},
   note={International Series in Pure and Applied Physics},
   publisher={McGraw-Hill},
   address={New York},
   date={1980},
   pages={xxii+705},
   isbn={0-07-032071-3},
}

\bib{Kato_1972}{article}{
   author={Kato, Tosio},
   title={Schr\"{o}dinger operators with singular potentials},
   booktitle={Proceedings of the International Symposium on Partial
   Differential Equations and the Geometry of Normed Linear Spaces
   (Jerusalem, 1972)},
   journal={Israel J. Math.},
   volume={13},
   date={1972},
   pages={135--148 (1973)},
   issn={0021-2172},
   doi={10.1007/BF02760233},
}

\bib{Kobayashi_Nomizu_1963}{book}{
   author={Kobayashi, Shoshichi},
   author={Nomizu, Katsumi},
   title={Foundations of differential geometry},
   volume={I},
   publisher={Interscience},
   address={New York-London},
   date={1963},
   pages={xi+329},
}

\bib{Lieb_Loss_2001}{book}{
   author={Lieb, Elliott H.},
   author={Loss, Michael},
   title={Analysis},
   series={Graduate Studies in Mathematics},
   volume={14},
   edition={2},
   publisher={American Mathematical Society},
   address={Providence, R.I.},
   date={2001},
   pages={xxii+346},
   isbn={0-8218-2783-9},
   doi={10.1090/gsm/014},
}

\bib{Loomis_Sternberg}{book}{
   author={Loomis, Lynn H.},
   author={Sternberg, Shlomo},
   title={Advanced calculus},
   publisher={Jones and Bartlett},
   address={Boston, Mass.},
   date={1990},
   pages={xii+580},
   isbn={0-86720-122-3},
}

\bib{Nguyen_VanSchaftingen}{article}{
  author={Nguyen, Hoai-Minh}*{inverted={yes}},
  author={Van Schaftingen, Jean},
  title={Characterization of the traces on the boundary of functions in magnetic Sobolev spaces},
  eprint={https://arxiv.org/abs/1905.01188},
}

\bib{Nicolaescu_2007}{book}{
   author={Nicolaescu, Liviu I.},
   title={Lectures on the geometry of manifolds},
   edition={2},
   publisher={World Scientific Publishing},
   address={Hackensack, N.J.},
   date={2007},
   pages={xviii+589},
   isbn={978-981-277-862-8},
   isbn={981-277-862-4},
   doi={10.1142/9789812770295},
}

\bib{Nijenhuis}{article}{
   author={Nijenhuis, Albert},
   title={On the holonomy groups of linear connections},
   partial={
    part={Ia},
    subtitle={General properties of affine connections},
    journal={Indagationes Math.},
    volume={15},
    date={1953},
    pages={233--240},
    },
       partial={
    part={Ib},
    subtitle={General properties of affine connections},
    journal={Indagationes Math.},
    volume={15},
    date={1953},
    pages={241--249},
    },
  partial={
    part={II},
    subtitle={Properties of general linear connections},
    journal={Indagationes Math.},
    volume={16},
    date={1954},
    pages={17--25},
    }
}

\bib{Reckziegel_Wilhelmus_2006}{article}{
   author={Reckziegel, Helmut},
   author={Wilhelmus, Eva},
   title={How the curvature generates the holonomy of a connection in an arbitrary fibre bundle},
   journal={Results Math.},
   volume={49},
   date={2006},
   number={3-4},
   pages={339--359},
   issn={1422-6383},
   doi={10.1007/s00025-006-0228-y},
}

\bib{Sternberg_1964}{book}{
   author={Sternberg, Shlomo},
   title={Lectures on differential geometry},
   publisher={Prentice-Hall},
   address={Englewood Cliffs, N.J.},
   date={1964},
   pages={xv+390},
}

\bib{Uhlenbeck_1982}{article}{
  author={Uhlenbeck, Karen K.},
  title={Connections with \(L^{p}\) bounds on curvature},
  journal={Comm. Math. Phys.},
  volume={83},
  date={1982},
  number={1},
  pages={31--42},
  issn={0010-3616},
}

\end{biblist}

\end{bibdiv}

\end{document}